\newcommand{\End}{\mbox{End}}
\newcommand{\Hom}{\mbox{Hom}}
\newcommand{\Fac}{\mbox{Fac}}
\newcommand{\Sub}{\mbox{Sub}}
\renewcommand{\mod}{\mbox{mod}}
\newtheorem{theorem}{Theorem}[section]
\newtheorem{corollary}[theorem]{Corollary}
\newtheorem{lemma}[theorem]{Lemma}
\newtheorem{conjecture}[]{Conjecture}
\theoremstyle{definition}
\newtheorem{definition}[theorem]{Definition}
\theoremstyle{remark}
\begin{document}
\normalem

\title{A $\tau$-tilting approach to the first Brauer-Thrall conjecture}

\author{Sibylle Schroll}

\address{School of Mathematics and Actuarial Sciences, University of Leicester, Leicester, LE1  7RH}
\email{schroll@leicester.ac.uk, hjtc1@leicester.ac.uk}
\thanks{The authors are supported by the EPSRC through the Early Career Fellowship, EP/P016294/1.
}

\author{Hipolito Treffinger}

\subjclass[2010]{16G20, 16S90, 05E15, 16W20, 16D90, 16P10}

\maketitle

\begin{abstract}
In this paper we study the behaviour of  modules over finite dimensional algebras whose endomorphism algebra is a division ring.
We show that there are finitely many such modules in the module category of an algebra if and only if the length of all such modules is bounded. 
\end{abstract}

\section{Introduction}

In recent years, representation theory has played a central role in many of the latest developments and breakthroughs in mathematics and mathematical physics, such as stability conditions, DT-invariants and homological mirror symmetry. 
This builds on the history of representation theory of algebras which spans almost two centuries, for a historical account see for example \cite{Gustafson1882}. 
The theory transformed in the late sixties with the introduction of homological methods.
This has unified and re-structured representation theory, establishing the tools and techniques which are widely used today.  
Another great impact on representation theory has resulted from the theory of cluster algebras, leading to the introduction of concepts such as cluster categories, mutation and $\tau$-tilting theory.
In this paper we use some of this newly developed theory in the pursuit of one of the most classical problems in representation theory, the first Brauer-Thrall conjecture.

A key observation on which much of representation theory is based is the fact that any representation uniquely factors into a direct sum of indecomposable representations. 
This quickly led to the now well known dichotomy of algebras into algebras of finite and infinite representation type according to whether there are finitely or infinitely many non-isomorphic indecomposable representations.
A crucial role in this development was played by the first and second Brauer-Thrall conjectures. 
Much of the last century's work in representation theory was motivated by the work on these conjectures.
The first conjecture was proved in the late sixties by Roiter \cite{Roiter1968} for finite dimensional algebras over a field using what is now known as the Gabriel-Roiter measure. 
A different proof of this conjecture for Artinian rings using almost split sequences was given by Auslander \cite{Auslander1974} in the early seventies. 
The second Brauer Thrall conjecture was not proved until the mid eighties when Bautista gave a proof of the conjecture for  finite dimensional algebras over a field \cite{Bautista1985}.

The first Brauer-Thrall conjecture states that for a finite dimensional algebra $A$ over a field, if there is a positive integer $n$ such that every indecomposable finite dimensional $A$-module has less than $n$ composition factors, then $A$ is of finite representation type. 

The second Brauer-Thrall conjecture states that if a finite dimensional algebra $A$ over a field is of infinite representation type, then there are infinitely many integers $d$ such that there exist infinitely many isomorphism classes of indecomposable $A$-modules with $d$ composition factors. 

In parallel to the work on the Brauer-Thrall conjectures, classical tilting theory was introduced in 1980 in \cite{BB1980} as an axiomatic approach to the study of reflection functors. 
It quickly evolved as a powerful tool to relate the representation theory of different algebras via derived equivalences \cite{Happel1988, Rickard1989}.
Many subsequent developments in representation theory are based on tilting theory, making extensive use of  Auslander-Reiten theory. 
In \cite{AIR} the authors take this idea further, introducing the new concept of $\tau$-tilting theory by combining ideas from tilting theory with Auslander-Reiten theory and more specifically the Auslander-Reiten translate $\tau$ which gives the theory its name. 
It quickly emerged that $\tau$-tilting theory is a powerful tool relating representation theory with stability conditions, scattering diagrams and mirror symmetry.

In terms of representation theory, $\tau$-tilting theory has given new ways of describing the module category of a finite dimensional algebra by establishing bijections between $\tau$-tilting pairs, functorially finite torsion classes and $2$-term silting complexes.
In particular, the representation theoretic notion of a \emph{brick}, that is a module whose endomorphism algebra is a division ring, plays a central role in $\tau$-tilting theory.  
Namely, in \cite{DIJ19} it is shown that the module category of an algebra has finitely many $\tau$-tilting pairs if and only if it has finitely many bricks. 
These algebras, known as \textit{$\tau$-tilting finite} algebras, play a similar role in $\tau$-tilting theory as representation finite algebras play in classical representation theory. 
For example, each $\tau$-tilting finite algebra has a well-defined category of wide subcategories \cite{Buan2019}, the support of their scattering diagram of a $\tau$-tilting finite algebra is completely determined by their $\tau$-tilting pairs \cite{Bridgeland2017, BST2019} and the stability manifold of a finite dimensional algebra $A$ is contractible if the algebra is silting-discrete which implies, in particular, that the heart of any bounded t-structure of the derived category of $A$ is a module category over a $\tau$-tilting finite algebra \cite{PSZ18}.
This analogy between $\tau$-tilting finite and representation finite algebras can be extended even further, since one can specify the classical Brauer-Thrall conjectures from arbitrary indecomposable representations to bricks, obtaining in this way a $\tau$-tilting version of these conjectures, see \cite{STV19}.

\begin{conjecture}[First $\tau$-Brauer-Thrall conjecture]\label{conj:tBT1}
Let $A$ be a finite dimensional $K$-algebra. 
If there is a positive integer $n$ such that every finite dimensional brick has less than $n$ composition factors, then $A$ is $\tau$-tilting finite.
\end{conjecture}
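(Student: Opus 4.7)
My plan is to prove the contrapositive: assuming $A$ is $\tau$-tilting infinite, I will produce bricks of arbitrarily large composition length. By \cite{DIJ19}, $\tau$-tilting infiniteness of $A$ is equivalent to the existence of infinitely many isomorphism classes of bricks, and equivalently to the lattice $\operatorname{tors} A$ of torsion classes being infinite. Fix an integer $n$ and suppose for contradiction that every brick of $A$ has length strictly less than $n$. Since only finitely many dimension vectors of total length less than $n$ exist, the pigeonhole principle yields a dimension vector realised by infinitely many pairwise non-isomorphic bricks. A useful refinement is to exploit the brick-labelling of covering relations in $\operatorname{tors} A$: each cover is marked by a unique brick, so the bounded length hypothesis confines the possible labels to a finite set of dimension vectors. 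Infiniteness of $\operatorname{tors} A$ then forces infinitely many covers to share a common label, producing an infinite chain or an infinite antichain of covering relations suitable for further analysis.

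The second step would apply a $\tau$-tilting reduction along a carefully chosen brick $B$ drawn from this infinite family, replacing $A$ by a smaller algebra $A_B$ whose module category realises the $\tau$-perpendicular category of $B$ inside $\operatorname{mod} A$. The strategy is to show that the two hypotheses descend: $A_B$ remains $\tau$-tilting infinite, because infinitely many bricks of $A$ distinct from $B$ restrict to bricks of $A_B$, and its bricks remain of bounded length. An induction on the number of simple modules, whose base case is the semisimple algebra with no non-trivial bricks, then produces the desired contradiction.

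The principal obstacle I foresee is precisely the control of composition length under this reduction step: the length of a brick of $A_B$ is measured with respect to the simples of $A_B$ in the $\tau$-perpendicular subcategory, and it need not coincide with, or be bounded by, its length as an $A$-module. Without an explicit length-preserving inequality the bounded-length hypothesis is not automatically inherited by $A_B$, and the induction fails. Overcoming this will require selecting the reduction brick $B$ to be minimal for a well-chosen invariant finer than the rank, whose strict decrease at each reduction is both verifiable and compatible with the combinatorics of the brick labelling on $\operatorname{tors} A$; once such an invariant is in place, well-foundedness forces termination and delivers the contradiction.
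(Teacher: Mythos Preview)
Your proposal is not a proof: it is a strategy with a gap you yourself identify and do not close. The composition length of a brick in the reduced algebra $A_B$ is measured against the simples of the $\tau$-perpendicular subcategory, and you give no argument bounding it in terms of the $A$-length; the ``well-chosen invariant finer than the rank'' is promised but never produced, so the induction has no engine. There are further unflagged problems. First, Jasso's reduction requires $B$ to be $\tau$-rigid, and a brick drawn from an infinite family sharing a dimension vector has no reason to be $\tau$-rigid; indeed the bricks labelling arbitrary covers in $\operatorname{tors} A$ are not $\tau$-rigid in general. Second, even granting a $\tau$-rigid $B$, your assertion that ``infinitely many bricks of $A$ distinct from $B$ restrict to bricks of $A_B$'' is unsubstantiated: membership in $B^{\perp}\cap{}^{\perp}\tau B\cap P^{\perp}$ is a genuine $\Hom$-vanishing condition, and nothing in your setup forces infinitely many of the remaining bricks to satisfy it, so $A_B$ may well be $\tau$-tilting finite and the induction collapses at the first step.

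The paper avoids reduction entirely and argues via $c$-vectors. Theorem~\ref{thm:c-vec} shows that every $c$-vector $\mathsf{c}$ satisfies $D_A[B_{\mathsf{c}}]=m_{\mathsf{c}}\mathsf{c}$ for some brick $B_{\mathsf{c}}$ and nonzero integer $m_{\mathsf{c}}$. A $\tau$-tilting infinite algebra has infinitely many $\tau$-tilting pairs, hence infinitely many $G$-matrices, hence infinitely many $C$-matrices, hence infinitely many $c$-vectors in $\mathbb{Z}^n$; by pigeonhole their $\ell^1$-norms are unbounded, and the inequality $\sum_i[B_{\mathsf{c}}]_i\geq\delta_A^{-1}\sum_i|c_i|$ (using sign-coherence and $|m_{\mathsf{c}}|\geq 1$) produces bricks of arbitrarily large length directly. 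The argument is short and needs no descent; the cost is the preliminary categorification of $c$-vectors, proved using Jasso's description of the perpendicular category of an almost-complete $\tau$-tilting pair together with Lemma~\ref{lem:ARequation}.
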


\begin{conjecture}[Second $\tau$-Brauer-Thrall conjecture]\label{conj:tBT2}
Let $A$ be a $\tau$-tilting infinite finite dimensional $K$-algebra. 
Then there is an integer $d$ such that there exist infinitely many isomorphism classes of bricks with $d$ composition factors.
\end{conjecture}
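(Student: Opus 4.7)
The strategy is to leverage the first $\tau$-Brauer-Thrall conjecture, which is the main theorem of this paper, and to couple it with geometric arguments on the module variety in order to localise the unbounded length phenomenon to a single composition length.

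After reducing to the case where $K$ is algebraically closed, I would argue by contradiction: suppose that for every integer $d$ there are only finitely many isomorphism classes of bricks of length $d$. Since $A$ is $\tau$-tilting infinite, the contrapositive of Conjecture~\ref{conj:tBT1} (established in this paper) yields an infinite family of pairwise non-isomorphic bricks whose composition lengths are unbounded. Combined with the standing assumption, this produces a strictly increasing sequence of lengths $d_1<d_2<\cdots$, each realised by at least one brick.

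For each $d$, the subset of bricks inside the affine module variety $\mathrm{rep}_d(A)$ is a $\mathrm{GL}_d$-stable locally closed subvariety, since the condition $\dim \End = 1$ cuts out a constructible locus. The key step would be to promote the unbounded-length sequence into a one-parameter family of pairwise non-isomorphic bricks at some fixed length. A natural attempt is to mimic Crawley-Boevey's generic-module machinery and extract from the sequence a generic brick at some fixed dimension, whose existence would immediately yield infinitely many pairwise non-isomorphic bricks of that length. A more $\tau$-tilting-theoretic alternative is to exploit the brick labelling of cover relations in the lattice of torsion classes, or equivalently of walls in the scattering diagram, and argue that an infinite labelled chain must repeat a dimension vector.

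The main obstacle is precisely this last step. Bricks form an open stratum in the module variety but they do not organise themselves into Auslander-Reiten components or one-parameter families in any obviously controllable way, and the classical generic-module technology produces indecomposables rather than bricks. On the $\tau$-tilting side, walls in the scattering diagram can carry bricks of arbitrarily large dimension, so a naive compactness argument on $g$-vector cones fails. In short, the crux of the conjecture is the passage from \emph{unboundedness of brick lengths} to \emph{accumulation of bricks at a single length}, and this is exactly why the present paper resolves only the first $\tau$-Brauer-Thrall conjecture and records the second as an open problem.
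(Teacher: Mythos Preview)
The statement you were asked to prove is Conjecture~\ref{conj:tBT2}, which the paper explicitly records as \emph{open}. There is no proof in the paper to compare your proposal against; the authors state that ``Conjecture 2 is open in general'' and only cite partial results for special classes (non-distributive, biserial, special biserial algebras). Your final paragraph correctly diagnoses this.

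Regarding the content of your sketch: the reduction to the algebraically closed case, the appeal to the first $\tau$-Brauer-Thrall theorem to obtain bricks of unbounded length, and the attempt to localise via module varieties or the wall-and-chamber structure are all reasonable lines of attack, and indeed are close in spirit to the geometric approach of Mousavand cited in the paper for the special cases. You have also put your finger on the genuine obstruction: generic-module and deformation arguments produce indecomposables, not bricks, and there is no known mechanism that forces an infinite sequence of bricks of growing length to degenerate to, or otherwise spawn, an infinite family of bricks at a single dimension vector. One minor correction: the brick locus in $\mathrm{rep}_d(A)$ is constructible but not in general locally closed, and over a non-algebraically-closed field the condition is $\End(B)$ a division ring rather than $\dim\End=1$; neither point affects your overall assessment. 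In summary, your proposal is an accurate account of why the conjecture remains open rather than a proof, and that matches the paper's own treatment.
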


In this paper we show that Conjecture \ref{conj:tBT1} holds. 

\begin{theorem}\label{thm:tBT1}
Let $A$ be a finite dimensional $K$-algebra. 
If there is a positive integer $n$ such that every finite dimensional brick has less than $n$ composition factors, then $A$ is $\tau$-tilting finite.
\end{theorem}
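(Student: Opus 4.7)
The plan is to prove the contrapositive: if $A$ is $\tau$-tilting infinite, then $\mod A$ contains bricks of arbitrarily large composition length. By the bijection of \cite{DIJ19}, $\tau$-tilting infiniteness is equivalent to the existence of infinitely many isomorphism classes of bricks, so we begin with an infinite family $\mathcal{B}$ of bricks in $\mod A$ and must upgrade this to bricks of unbounded length.

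\textbf{Reduction to a minimal $\tau$-tilting infinite quotient.} Any brick of a quotient $A/I$, viewed by inflation as an $A$-module, remains a brick of the same composition length; conversely, a brick of $A$ annihilated by $I$ is a brick of $A/I$. Since $A$ is finite dimensional, the set of two-sided ideals $I$ for which $A/I$ is still $\tau$-tilting infinite admits a maximal element by a standard Noetherian/Zorn argument. Replacing $A$ by the corresponding quotient $B=A/I$, we may assume that $A=B$ is \emph{minimal} $\tau$-tilting infinite, i.e.\ every proper quotient of $B$ is $\tau$-tilting finite.

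\textbf{Support information from minimality.} For each primitive idempotent $e\in B$ with corresponding simple $S_e$, the quotient $\bar B := B/BeB$ is $\tau$-tilting finite by minimality, so by \cite{DIJ19} it has only finitely many bricks. A brick $M$ of $B$ with $eM=0$ is already a brick of $\bar B$; hence all but finitely many members of $\mathcal{B}$ have $S_e$ as a composition factor. Intersecting these cofinite sets over the finitely many primitive idempotents, all but finitely many bricks of $B$ involve every simple of $B$ in their composition series.

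\textbf{From support to unbounded length.} It remains to leverage this support information, together with the fact that the lattice of torsion classes of $B$ is infinite, to produce bricks of arbitrarily large composition length. A natural route is to exploit the combinatorial structure of that lattice (whose Hasse covers carry a natural labelling by bricks) in order to extract an infinite ascending chain of torsion classes whose successive labels must, since only finitely many simples are available, accumulate repeated composition factors. Combined with $\tau$-tilting reduction, this should allow an induction on the number of simples in which the minimality hypothesis can be tracked, producing bricks of arbitrarily large length.

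\textbf{Main obstacle.} The decisive difficulty is this last step. The support argument of Step 2 only yields bricks of length at least the fixed integer $\#\{\text{simples of } B\}$, so one cannot avoid exploiting \emph{repetition} of composition factors; furthermore, a priori $\tau$-tilting reduction does not preserve minimal $\tau$-tilting infiniteness, so the inductive framework must be set up to carry a sufficiently robust form of the minimality hypothesis through each reduction. Converting the abstract abundance of bricks into the metric statement of unbounded length is the technical core of the theorem.
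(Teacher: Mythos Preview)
Your proposal contains a genuine and, by your own admission, unfilled gap. The reduction to a minimal $\tau$-tilting infinite quotient and the support argument of Step~2 are correct, but together they only produce infinitely many bricks each of composition length at least the number of simples of $B$; this is a fixed lower bound, not an unbounded one. The sketch in Step~3 (chains in the torsion lattice, brick labelling, $\tau$-tilting reduction) does not actually force any single brick label to be long: an infinite chain of covers can perfectly well be labelled by bricks of uniformly bounded length, and $\tau$-tilting reduction neither preserves minimality nor gives obvious control on composition length. So as it stands the argument does not close, and the ``main obstacle'' you flag is exactly the content of the theorem.

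The paper's proof bypasses all of this by a completely different mechanism. It never passes to a quotient or analyses supports. Instead it uses that to each $\tau$-tilting pair one attaches a $G$-matrix of $g$-vectors, and its inverse-transpose, the $C$-matrix; the columns are the $c$-vectors. The paper's Theorem~\ref{thm:c-vec} shows that every $c$-vector $\mathsf{c}$ satisfies $D_A[B_{\mathsf{c}}]=m_{\mathsf{c}}\mathsf{c}$ for some brick $B_{\mathsf{c}}$ and some non-zero integer $m_{\mathsf{c}}$. Now if $A$ is $\tau$-tilting infinite there are infinitely many $\tau$-tilting pairs, hence infinitely many distinct $G$-matrices, hence infinitely many distinct $c$-vectors, all lying in the fixed lattice $\mathbb{Z}^n$. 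A pigeonhole argument then forces $\sum_i|c_i|$ to be unbounded over this set, and the relation $D_A[B_{\mathsf{c}}]=m_{\mathsf{c}}\mathsf{c}$ together with sign-coherence converts this directly into unbounded composition length for the associated bricks. The point is that the passage through integer vectors in a fixed-rank lattice is what turns ``infinitely many bricks'' into ``bricks of unbounded length'' with no further combinatorics; this is precisely the step your outline is missing.
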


As a direct consequence we obtain the following new proof of the classical first Brauer-Thrall conjecture for $\tau$-tilting infinite algebras.

\begin{corollary}
Let $A$ be finite dimensional $K$-algebra. 
If $A$ is $\tau$-tilting infinite, then there are indecomposable $A$-modules with arbitrarily many composition factors.
\end{corollary}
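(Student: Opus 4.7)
The plan is to deduce the corollary directly from Theorem \ref{thm:tBT1} by contraposition, together with the elementary observation that every brick is indecomposable.

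First, I would reformulate Theorem \ref{thm:tBT1} contrapositively: if $A$ is $\tau$-tilting infinite, then for every positive integer $n$ there exists a finite dimensional brick $M_n$ with at least $n$ composition factors. In particular, the family $\{M_n\}_{n\geq 1}$ exhibits bricks of unbounded composition length.

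Next, I would recall that a brick $M$, being a module whose endomorphism algebra $\End_A(M)$ is a division ring, contains no nontrivial idempotents in its endomorphism ring. Therefore $M$ admits no nontrivial direct sum decomposition, so every brick is indecomposable. Applying this to each $M_n$ produced in the previous step yields a sequence of indecomposable $A$-modules with unbounded composition length, which is exactly the statement of the corollary.

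There is essentially no obstacle: the entire content of the corollary is packaged inside Theorem \ref{thm:tBT1}, and the only extra input is the standard fact that a module with local (indeed, division) endomorphism ring is indecomposable. The point of stating the corollary separately is expository, namely to highlight that the $\tau$-tilting approach recovers the classical first Brauer--Thrall conclusion on the class of $\tau$-tilting infinite algebras, which strictly contains the class of representation infinite algebras that are $\tau$-tilting infinite but is the relevant class for the $\tau$-tilting framework.
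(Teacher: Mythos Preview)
Your argument is correct and matches the paper's intended reasoning: the corollary is stated there as a direct consequence of Theorem~\ref{thm:tBT1}, and the only additional ingredient is exactly the observation you make, that a brick is indecomposable because its endomorphism ring has no nontrivial idempotents.
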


It is important to remark, however, that the first $\tau$-Brauer-Thrall conjecture is not equivalent to the first Brauer-Thrall conjecture, since there are representation infinite $\tau$-tilting finite algebras.
Examples of such algebras are preprojective algebras of quivers arising in representation theory \cite{Mizuno2014}, contraction algebras arising in the homological minimal model program \cite{August2018} and most tame blocks of group algebras arising in the representation theory of finite groups~\cite{Erdmann1990, JER}.
 
Conjecture 2 is open in general, but it has been shown to hold for particular classes of algebras.
Namely, in \cite{Mousavand2019-2} Conjecture 2 has been shown for non-distributive and biserial algebras using a  more geometric approach, see also  \cite{MousavandPhD}.
Another proof of Conjecture 2 for special biserial algebras is given in \cite{STV19} in the context of a detailed study of band modules and bricks.
 
We note that if Conjecture 2 holds then it follows that for $\tau$-tilting infinite algebras the second Brauer-Thrall conjecture holds by the results in \cite{Smalo1980}.
\medskip

We prove Theorem~\ref{thm:tBT1} by using two important notions arising from the theory of cluster algebras, namely \textit{$g$-vectors} and \textit{$c$-vectors}. 
In the context of cluster algebras $g$- and $c$-vectors are defined independently from each other. However, they are related by the so-called \textit{tropical dualities} \cite{Derksen2010, Nakanishi2012}.

In representation theory, $g$-vectors have a natural interpretation: the $g$-vector of a module encodes its minimal projective presentation \cite{AIR, DIJ19}. 
In \cite{Fu2017}, $c$-vectors have then been defined for any algebra using the tropical duality equation, even when there is no underlying cluster algebra.

In Theorem~\ref{thm:c-vec} we give a representation theoretic interpretation of $c$-vectors by relating each $c$-vector  to at least one brick in the module category.
Before giving the precise statement of Theorem~\ref{thm:c-vec}, given a finite dimensional algebra $A$, we define the following $n \times n$ matrix 
$$
D_A:=\left(\begin{matrix}
				\delta_{1} & \dots   & 0 \\
				\vdots           & \ddots & \vdots     \\
				0                   & \dots   & \delta_{n}
		\end{matrix} \right)
$$
where $\delta_i$ is the dimension over $K$ of the endomorphism algebra of the simple $A$-module $S(i)$, where $\{S(1), \dots, S(n)\}$ is a complete family of representatives of isomorphism classes of simple $A$-modules.
Note that in the case of some hereditary algebras such as, for example, any non-cyclic orientation of $B_n$, the matrix $D_A$ coincides with the diagonal matrix associated to the corresponding skew-symmetrisable cluster algebra.

The proof of Theorem 1.1 highlights the importance of $c$-vectors in representation theory. They are the main object of study in Theorem~\ref{thm:c-vec} which is our second main result. It generalises  \cite[Theorem 3.5]{Tre19} to  algebras over a field which is not necessarily algebraically closed. Loosely speaking, the result can be seen   as a categorification of $c$-vectors. 
For this denote by $[M]$ the class of an $A$-module $M$ in the Grothendieck group $K_0(A)$. 
Recall that $K_0(A)$ is generated by the classes $[S(1)], \dots, [S(n)]$.

\begin{theorem}\label{thm:c-vec}
Let $A$ be a finite dimensional $K$-algebra. 
Then for every $c$-vector $\mathsf{c}$ of $A$ there exists a brick $B_\mathsf{c}$ in the module category of $A$ and a non-zero integer $m_\mathsf{c}$ such that  
$$D_A[B_\mathsf{c}]=m_\mathsf{c}\mathsf{c}.$$
\end{theorem}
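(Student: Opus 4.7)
The plan is to adapt the strategy of \cite[Theorem 3.5]{Tre19}, which handles the case where $K$ is algebraically closed. In that setting $D_A = \mathrm{Id}$ and every $c$-vector literally coincides, up to sign, with the dimension vector of a brick; the present theorem is the natural generalisation when the division algebras $\End_A(S(i))$ are not all equal to $K$, with the matrix $D_A$ converting between composition multiplicities and $K$-dimensions.

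First I would recall that every $c$-vector $\mathsf{c}$ arises from some $\tau$-tilting pair $(M,P)$ as a column of the matrix determined by tropical duality from the $g$-matrix $G_{(M,P)}$; changing sign if necessary, we may assume $\mathsf{c}$ is positive. Via the bijection between $\tau$-tilting pairs and functorially finite torsion classes $\Fac M$, together with the bijection between indecomposable summands of $(M,P)$ and the simple objects of the associated wide subcategory $\B \subseteq \mod A$ (alternatively, via left mutation of $(M,P)$ at the relevant summand), one singles out a canonical brick $B_\mathsf{c}$, namely the simple object of $\B$ labelling the wall of the mutation that produces $\mathsf{c}$.

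The heart of the argument is to compute $[B_\mathsf{c}]$ in $K_0(A)$ and compare it with $\mathsf{c}$. Starting from a minimal projective presentation $P_1 \to P_0 \to B_\mathsf{c} \to 0$, the $g$-vector of $B_\mathsf{c}$ is read off from the multiplicities of indecomposable projectives. The mutation relating $B_\mathsf{c}$ to the $i$-th summand of $(M,P)$ expresses this $g$-vector in terms of the column of $G_{(M,P)}^{-T}$ defining $\mathsf{c}$. Applying the Cartan matrix to pass from $g$-vectors to $K$-dimension vectors, and using that $\dim_K e_j B_\mathsf{c} = \delta_j [B_\mathsf{c}:S(j)]$, the left-hand side becomes exactly $D_A[B_\mathsf{c}]$, while the right-hand side becomes an integer multiple of $\mathsf{c}$.

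The main obstacle is controlling the scalar $m_\mathsf{c}$: in the algebraically closed case the calculation collapses to $m_\mathsf{c}=1$, but in general the factors $\delta_j$ appearing on the two sides need not cancel term by term, only globally. I expect $m_\mathsf{c}$ to equal $\dim_K \End_A(B_\mathsf{c})$ (up to sign), so the delicate step will be to show that the Euler-form computation involving $D_A$, the Cartan matrix, and the mutation relation between $B_\mathsf{c}$ and the summand of $(M,P)$ produces precisely this integer rather than a rational correction. Once this identification is made, the theorem follows directly from tropical duality.
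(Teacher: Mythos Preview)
Your identification of the brick is right: the paper also selects, for each summand index $i$ of the $\tau$-tilting pair $(M,P)$, the unique brick $B_i$ in the Jasso perpendicular category $\hat{M}_i^\perp \cap {}^\perp\tau\hat{M}_i \cap \hat{P}_i^\perp$. But the computation you outline does not close, for two reasons.

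First, the sentence ``changing sign if necessary, we may assume $\mathsf{c}$ is positive'' invokes sign-coherence of $c$-vectors, which in this paper is deduced as a \emph{corollary} of the theorem you are trying to prove (Corollary~\ref{cor:sign-coherence}); you cannot assume it at this stage.

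Second, and more seriously, the route via the $g$-vector of $B_\mathsf{c}$ and the Cartan matrix is a detour that does not connect. There is no mutation formula expressing $g^{B_\mathsf{c}}$ as a column of $G_{(M,P)}^{-T}$, and the Cartan matrix does not convert $g$-vectors into composition-series vectors except when $A$ is hereditary; in general there is a $\tau$-correction term, which is exactly the content of Lemma~\ref{lem:ARequation}. The paper never looks at $g^{B_\mathsf{c}}$ at all. Instead it pairs each $g$-vector $g^j$ of the summands of $(M,P)$ against $[B_i]$ using the Auslander--Reiten formula
\[
\langle g^j,[B_i]\rangle_A=\dim_K\Hom_A(M_j,B_i)-\dim_K\Hom_A(B_i,\tau M_j)
\]
(and the analogous identity for the projective summands). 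Since $B_i$ lies in $M_j^\perp\cap{}^\perp\tau M_j$, respectively in $P_j^\perp$, for every $j\neq i$, all off-diagonal pairings vanish, so the matrix $G_{(M,P)}^T D_A X_{(M,P)}$ is diagonal, where $X_{(M,P)}$ has columns $[B_1],\dots,[B_n]$. Inverting gives $D_A X_{(M,P)}=C_{(M,P)}D$ with $D$ diagonal, i.e.\ $D_A[B_i]=\langle g^i,[B_i]\rangle_A\,\mathsf{c}_i$, and the diagonal entry is forced to be a nonzero integer because $D_A[B_i]\neq 0$. The missing idea in your plan is precisely this orthogonality-plus-Auslander--Reiten-formula step, which produces the scalar $m_\mathsf{c}=\langle g^i,[B_i]\rangle_A$ directly, without needing to guess that it equals $\dim_K\End_A(B_\mathsf{c})$ or to pass through the Cartan matrix.
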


 {\bf Acknowledgements:} The authors would like to thank Ibrahim Assem and Yadira Valdivieso D\'iaz for many stimulating conversations and exchanges.

\section{Proof of Theorem~\ref{thm:tBT1}}

\subsection{Setting}
In this paper $A$ is a  finite dimensional $K$-algebra. 
Since any $K$-algebra is Morita equivalent to a basic algebra, without loss of generality we can assume that $A$ is basic. 
We denote by $\mod A$ the category of finitely generated right $A$-modules.
As a right $A$-module, $A$ decomposes into a direct sum of non-isomorphic indecomposable right  $A$-modules $A = P(1) \oplus \ldots \oplus P(n)$. 
Every projective $A$-module is a direct sum of copies of indecomposable projective $A$-modules $P(i)$ and we define $S(i)$ to be the simple $A$-module with projective cover $P(i)$, for $1 \leq i \leq n$.
 
In order to define the Auslander-Reiten translation $\tau$ for a finite dimensional algebra $A$, we recall that the Nakayama functor $\nu$ is the composition of the dualities $\Hom_K(-,K)$ followed by $ \Hom_A(-,A)$.  
Recall that for a projective $A$-module $P$ we have that $\nu P$ is an injective $A$-module and $\Hom_A(P, X)=0$ if and only if $\Hom_A(X, \nu P)=0$ for all $A$-modules $X$.
For an $A$-module $M$, $\tau M$ is defined as the kernel of the image under $\nu$ of the minimal projective presentation of $M$. 
The Auslander-Reiten translation $\tau$ induces a bijection between the set of non-isomorphic non-projective indecomposable $A$-modules and  the set of non-isomorphic non-injective indecomposable $A$-modules. 

The Grothendieck group $K_0(A)$ of $A$ is the free abelian group generated by the indecomposable $A$-modules subject to the relations induced by short exact sequences.  
It is a consequence of the Jordan-H\"older Theorem that the isomorphism classes of simples $A$-modules form a basis of $K_0(A)$. 
Thus if there are $n$ isomorphism classes of simple $A$-modules then  $K_{0}(A)$ is isomorphic to $\mathbb{Z}^{n}$. 
Denote by $\Delta: K_{0}(A)\to \mathbb{R}^{n}$ the embedding of $K_{0}(A)$ into $\mathbb{R}^{n}$ given by $\Delta(M)=[M]$ where $[M]$ is the class of $M$ in $K_0(A)$, for every $A$-module $M$. 
Using this identification, we say that $[M]$ is a vector of $\mathbb{R}^{n}$.

\subsection{$\tau$-tilting theory}
We recall the tools from $\tau$-tilting theory needed for the proof of Theorem~\ref{thm:tBT1}. 
The central notion in $\tau$-tilting theory is the notion of $\tau$-tilting pairs defined in \cite{AIR}.

\begin{definition}
Let $A$ be a finite dimensional $K$-algebra, $M$ an $A$-module and $P$ a projective $A$-module. 
The module $M$ is called $\tau$-rigid if $\Hom_{A}(M,\tau M)=0$ and the pair $(M,P)$ is called \textit{$\tau$-rigid} if $M$ is $\tau$-rigid and $\Hom_{A}(P,M)=0$.
Moreover, we say that $(M,P)$ is \textit{$\tau$-tilting} if the  number of non-isomorphic indecomposable direct summands of $M\oplus P$ is equal to $n$ where $n$ is the rank of the Grothendieck group $K_0(A)$ of $A$.
\end{definition}

When we consider $\tau$-rigid pairs $(M,P)$, we implicitly assume that $M$ and $P$ are basic, that is if $M=\bigoplus_{i=1}^kM_i$ and $P=\bigoplus_{j=k+1}^tP_j$ then $M_i $ is not isomorphic to $ M_j$ and $P_i$ is not isomorphic to $ P_j$ for $i \neq j$. 
Furthermore, it is shown in \cite{AIR} that for two distinct $\tau$-tilting pairs $(M,P)$ and $(M', P')$, the modules  $M$ and $ M'$ are not isomorphic.  

\subsection{Integer vectors associated to modules}

The notions of $c$-vectors and $g$-vectors first arose in the context of cluster algebras (see \cite{FZ4}). 
In this paper we will use the representation theoretic versions of these vectors.

\begin{definition}
Let $A$ be a finite dimensional $K$-algebra, $M$ an $A$-module and $P_1 \to P_0 \to M \to 0$
the minimal projective presentation of $M$, with $P_0=\bigoplus\limits_{i=1}^n P(i)^{a_i}$ and $P_1=\bigoplus\limits_{i=1}^n P(i)^{a'_i}$. 
The integer vector $g^M=(a_1-a'_1, a_2-a'_2,\dots, a_n-a'_n)$ is called the \emph{$g$-vector of $M$}.
\end{definition}

The proof of Theorem~\ref{thm:tBT1} is based on the following properties of $g$-vectors. 
By \cite{AIR, DIJ19} if $(M,P)$ is a $\tau$-tilting pair then the set 
$\{g^{M_{1}},\dots, g^{M_{k}},-g^{P_{k+1}},\dots, -g^{P_{n}}\}$ is a basis of $\mathbb{Z}^{n}$ and if $M$ and $N$ are two $\tau$-rigid modules such that $g^M=g^N$, then $M$ and $N$ are isomorphic.

Another property of $g$-vectors we need is based on a general result of Auslander and Reiten in \cite{AR1985} which holds for any Artinian algebra which is finitely generated over its centre. 
In order to state this property, given a finite dimensional algebra $A$, we need to define the following inner product $\langle - ,- \rangle_A$ on $\mathbb{R}^n$
$$\langle v,w\rangle_A=(v_1, \dots, v_n)D_A\left(\begin{matrix} w_1\\ \vdots \\ w_n \end{matrix}\right)
=\sum_{i=1}^n \delta_i v_i w_i$$
where $D_A$ is defined as in the introduction.

\begin{lemma}\label{lem:ARequation}
Let $A$ be a finite dimensional $K$-algebra and let $M$ and $N$ be two $A$-modules.
Then $$\langle g^{M},[N]\rangle_A=\dim_K(\Hom_A(M,N))-\dim_K(\Hom_A(N,\tau_A M)).$$
\end{lemma}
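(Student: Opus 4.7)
My plan is to reduce the identity to a standard Nakayama-duality computation. The approach splits naturally into three steps.

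First, I want to rewrite the left-hand side in terms of Hom-spaces from projective modules. The key observation is that for any simple $S(j)$, every morphism $P(i)\to S(j)$ factors through $\top P(i)=S(i)$, giving $\Hom_A(P(i),S(j))\cong\End_A(S(i))$ when $i=j$ and $0$ otherwise; hence $\dim_K\Hom_A(P(i),S(j))=\delta_i[S(j)]_i$. Since $\Hom_A(P(i),-)$ is exact and $[-]_i$ is additive on short exact sequences, induction on the composition length of $N$ extends this to $\dim_K\Hom_A(P(i),N)=\delta_i[N]_i$ for every finitely generated $N$. Summing over the decompositions $P_0=\bigoplus_i P(i)^{a_i}$ and $P_1=\bigoplus_i P(i)^{a'_i}$ yields
$$\langle g^M,[N]\rangle_A=\dim_K\Hom_A(P_0,N)-\dim_K\Hom_A(P_1,N).$$

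Next I would apply the functor $\Hom_A(-,N)$ to the minimal projective presentation $P_1\to P_0\to M\to 0$, producing the four-term exact sequence
$$0\to\Hom_A(M,N)\to\Hom_A(P_0,N)\to\Hom_A(P_1,N)\to C\to 0,$$
where $C$ denotes the cokernel. Comparing alternating dimensions with the display above reduces the lemma to the identification $\dim_K C=\dim_K\Hom_A(N,\tau M)$.

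The last step proves exactly that identification via Nakayama duality. Concretely, for every finitely generated projective $A$-module $P$ there is a natural isomorphism $\Hom_A(N,\nu P)\cong D\Hom_A(P,N)$, where $D=\Hom_K(-,K)$; this comes from combining $\nu=D\circ(-)^\ast$ with the standard identification $P^\ast\otimes_A N\cong\Hom_A(P,N)$. By the definition of $\tau$, the sequence $0\to\tau M\to\nu P_1\to\nu P_0$ is exact, so applying $\Hom_A(N,-)$ and transporting along the previous isomorphism yields
$$0\to\Hom_A(N,\tau M)\to D\Hom_A(P_1,N)\to D\Hom_A(P_0,N),$$
which is the $K$-dual of the tail $\Hom_A(P_0,N)\to\Hom_A(P_1,N)\to C\to 0$. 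Therefore $\Hom_A(N,\tau M)\cong DC$, and the dimensions match.

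I expect the main obstacle to be verifying naturality of the isomorphism $\Hom_A(N,\nu P)\cong D\Hom_A(P,N)$ in $P$ with respect to the differential $P_1\to P_0$, since this is what lets the two exact sequences be identified up to $K$-duality. The remaining ingredients---exactness of $\Hom_A(P(i),-)$, Jordan--Hölder, and the definition of $\tau$---are routine, although some bookkeeping is required with the factors $\delta_i$ because $K$ is not assumed algebraically closed and so the ranks over $\End_A(S(i))$ and the $K$-dimensions differ by precisely these constants.
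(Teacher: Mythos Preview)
Your argument is correct. Both you and the paper reduce the claim to the identity $\dim_K\Hom_A(P(i),N)=\delta_i[N]_i$ together with the Auslander--Reiten formula
\[
\dim_K\Hom_A(M,N)-\dim_K\Hom_A(N,\tau M)=\dim_K\Hom_A(P_0,N)-\dim_K\Hom_A(P_1,N).
\]
The difference is that the paper simply invokes \cite{AR1985} for this formula, whereas you supply a direct proof via the four-term exact sequence from $\Hom_A(-,N)$ and the natural Nakayama duality $\Hom_A(N,\nu P)\cong D\Hom_A(P,N)$ applied to $0\to\tau M\to\nu P_1\to\nu P_0$. Your route is more self-contained and transparent about where the $\tau$-term arises, at the cost of the naturality check you flagged; the paper's route is shorter but relies on an external reference. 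Otherwise the two proofs coincide, including the treatment of the weights $\delta_i$ via $\Hom_A(P(i),S(j))\cong\End_A(S(i))^{\delta_{ij}}$.
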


\begin{proof}
Let $P_1 \to P_0 \to M \to 0$ be the minimal projective presentation of $M$.
Then, by \cite{AR1985}, we have that 
$$\dim_K(\Hom_A(M,N)) - \dim_K(\Hom_A(N,\tau_A M)) = \dim_K(\Hom_A(P_0,N)) - \dim_K(\Hom_A(P_1,N)),$$
where $P_0=\bigoplus\limits_{i=1}^n P(i)^{a_i}$ and $P_1=\bigoplus\limits_{i=1}^n P(i)^{a'_i}$. 
Since the $\Hom$ functor is additive we have 
$$\dim_K(\Hom_A(P_0,N)) - \dim_K(\Hom_A(P_1,N)) =\sum_{i=1}^n (a_i-a'_i) \dim_K(\Hom_A(P(i),N)).$$
Thus it is enough to show $\dim_K(\Hom_A(P(i), N)) = \langle g^{P(i)}, [N]\rangle_A$.
But $ \dim_K(\Hom_A(P(i), N)) = \dim_K(\End_A(S(i)))[N]_i$, where $[N]_i$ is the $i$-th component of $[N]$ and the result follows.
\end{proof}

In \cite{Fu2017} Fu defines $c$-vectors for algebras over an algebraically closed field, but this definition still holds for arbitrary fields.

\begin{definition}\label{def:C-mat}
Let $(M,P)$ be a $\tau$-tilting pair and let \sloppy $\{g^{M_{1}},\dots, g^{M_{k}},-g^{P_{k+1}},\dots, -g^{P_{n}}\}$ be the corresponding basis of $g$-vectors of $\mathbb{Z}^n$.
Define the $g$-matrix $G_{(M,P)}$ of $(M,P)$ as 
$$G_{(M,P)}=\left(g^{M_{1}} ,\dots, g^{M_{k}},-g^{P_{k+1}},\dots , -g^{P_{n}}\right)=\left( \begin{matrix}  
				(g^{M_{1}})_{1} & \dots   & (-g^{P_{n}})_{1} \\
				\vdots      & \ddots & \vdots     \\
				(g^{M_{1}})_{n}  & \dots   & (-g^{P_{n}})_{n}
		\end{matrix} \right).$$
Then the $c$-matrix $C_{(M,P)}$ of $(M,P)$ is defined as
$C_{(M,P)}=(G^{-1}_{(M,P)})^{T}.$ 
Each column of $C_{(M,P)}$ is called a \textit{$c$-vector} of $A$. 
\end{definition}

We now show Theorem~\ref{thm:c-vec}. 
For that recall the following definitions.
For an $A$-module  $M$, we denote by $M^\perp$ the class of $A$-modules $X$ such that $\Hom_A(M,X)=0$.
Dually, we denote by $^\perp M$ the class of $A$-modules $Y$ such that $\Hom_A(Y,M)=0$.
Furthermore, for an $A$-module $M$ we define $\Fac M$ (respectively, $\Sub M$) as the class of all quotients (respectively, subobjects) of an object in the full additive category generated by $M$.

\begin{proof}[Proof of Theorem \ref{thm:c-vec}]
Let $(M,P)$ be a $\tau$-tilting pair and let $\{g^{1},\dots,g^{n}\}$ be the associated basis of $g$-vectors of  $\mathbb{Z}^n$.
Then, from $(M,P)$ we can construct $n$ different $\tau$-rigid pairs $(\hat{M}_1, \hat{P}_1), \dots, (\hat{M}_n, \hat{P}_n)$, where $(\hat{M}_i, \hat{P}_i)$ is obtained from $(M,P)$ by removing the $i$-th indecomposable direct summand of $M$, if $1 \leq i \leq k$, or the $i$-th indecomposable direct summand of $P$, if $k+1 \leq i \leq n$.
It follows from \cite[Theorem~3.8]{Jasso2015} that $\hat{M}_i^{\perp}\cap ^\perp\tau\hat{M}_i \cap \hat{P}_i^\perp$ is equivalent to the module category of a local algebra for all $1 \leq i \leq n$.
Thus, for every $1 \leq i \leq n$, there exists a unique brick $B_i$ in $\hat{M}_i^{\perp}\cap ^\perp\tau\hat{M}_i \cap \hat{P}_i^\perp$.
Consider the matrix
$$X_{(M,P)}:=\left(\begin{matrix}
				[B_{1}] ,& [B_2] ,& \dots   & [B_{n}]
		\end{matrix}\right)=\left(\begin{matrix}
				[B_{1}]_{1} & \dots   & [B_{n}]_{1} \\
				\vdots      & \ddots & \vdots       \\
				[B_{1}]_{n}  & \dots   & [B_{n}]_{n}
		\end{matrix}\right)
$$
having as the $i$-th column the vector $[B_i]$ corresponding to the brick $B_i$.
Now, we multiply $G_{(M,P)}^{T}$, the transpose of the $G$-matrix of $(M,P)$, by the matrix $D_A$ and $X_{(M,P)}$ in that order
$$
G_{(M,P)}^{T} D_{A} X_{(M,P)}= \left(\begin{matrix}
				\langle g^1, [B_1]\rangle_A & \langle g^1, [B_2]\rangle_A & \dots   & \langle g^1, [B_n]\rangle_A \\
				\langle g^2, [B_1]\rangle_A & \langle g^2, [B_2]\rangle_A & \dots   & \langle g^1, [B_n]\rangle_A \\
				\vdots     &  \vdots    & \ddots & \vdots     \\
				\langle g^n, [B_1]\rangle_A  & \langle g^n, [B_2]\rangle_A   & \dots   & \langle g^n, [B_n]\rangle_A
		\end{matrix} \right).
$$
Note that for  $j \neq i$,  $\hat{M}_i^{\perp}\cap ^\perp\tau\hat{M}_i \cap \hat{P}_i^\perp$ is a subcategory of $M_j^\perp \cap ^\perp \tau M_j$ if $1 \leq j \leq k$, and $\hat{M}_i^{\perp}\cap ^\perp\tau\hat{M}_i \cap \hat{P}_i^\perp$ is a subcategory of $P_j^\perp$ if $k+1 \leq j \leq n$ .
In particular, $B_i$ is in $M_j^\perp\cap ^\perp\tau M_j$ if $1 \leq j \leq k$ or $B_i$ is in $P_j^\perp$ if $k+1 \leq j \leq n$.
Then, by Lemma~\ref{lem:ARequation} we have $\langle g^{j}, [B_i] \rangle_A = 0$, for all for $1 \leq i,j \leq n$ and  $j \neq i$. 
This implies that $G_{(M,P)}^{T} D_{A} X_{(M,P)}$ is equal to a diagonal matrix $D$.
Thus $D_A X_{(M,P)}=C_{(M,P)}D$.
As a consequence $D_A[B_i]=\langle g^i, [B_i]\rangle_A\mathsf{c}_i$
where $\mathsf{c}_i$ is the $i$-th column of $C_{(M,P)}$.
Now, $D_A[B_i]=(\delta_1[B_i]_1, \dots, \delta_n[B_i]_n)$ where $\delta_k$ is non-zero for all $1 \leq k \leq n$ and $[B_i]_l$ is non-zero for some $1 \leq l \leq n$ because $B_i$ is a non-zero $A$-module. 
Then $D_A[B_i]=\langle g^i, [B_i]\rangle_A\mathsf{c}_i$ implies immediately that $\langle g^i, [B_i]\rangle_A$ is a non-zero integer and the result follows.
\end{proof}

\begin{corollary}\label{cor:sign-coherence}
Let $A$ be a $K$-algebra.
Then every $c$-vector of $A$ is sign-coherent, that is, all entries of a $c$-vector are either non-positive or non-negative.
\end{corollary}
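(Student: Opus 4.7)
The plan is to deduce the corollary directly from Theorem~\ref{thm:c-vec}. First I would recall the two features of the equation $D_A[B_{\mathsf{c}}] = m_{\mathsf{c}} \mathsf{c}$ that matter for sign-coherence: on the left-hand side, $D_A$ is the diagonal matrix whose entries $\delta_i = \dim_K \End_A(S(i))$ are strictly positive integers (each $\End_A(S(i))$ is a nonzero division algebra over $K$), and $[B_{\mathsf{c}}]$ is the class in $K_0(A)$ of an honest $A$-module, so its coordinates are the composition multiplicities of $B_{\mathsf{c}}$ in the basis $\{[S(1)], \dots, [S(n)]\}$ and are therefore non-negative integers.

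Consequently, every entry of $D_A[B_{\mathsf{c}}]$ is a non-negative integer, and at least one entry is strictly positive because $B_{\mathsf{c}} \neq 0$. Since Theorem~\ref{thm:c-vec} provides a non-zero integer $m_{\mathsf{c}}$ with $m_{\mathsf{c}} \mathsf{c} = D_A[B_{\mathsf{c}}]$, the vector $m_{\mathsf{c}} \mathsf{c}$ is coordinate-wise non-negative. Dividing by $m_{\mathsf{c}}$ preserves or reverses all signs simultaneously, so $\mathsf{c}$ is coordinate-wise non-negative when $m_{\mathsf{c}} > 0$ and coordinate-wise non-positive when $m_{\mathsf{c}} < 0$.

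Since every $c$-vector of $A$ arises from some $\tau$-tilting pair and hence is covered by Theorem~\ref{thm:c-vec}, this dichotomy applies to all $c$-vectors, which is precisely the sign-coherence statement. There is no real obstacle to this argument; the only thing to double-check is that Theorem~\ref{thm:c-vec} indeed produces a brick $B_{\mathsf{c}}$ for \emph{every} $c$-vector (not merely some distinguished ones), which is built into the statement, and that $\delta_i > 0$, which is automatic from finite dimensionality of the simples.
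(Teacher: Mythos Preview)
Your proposal is correct and matches the paper's intent: the corollary is stated immediately after the proof of Theorem~\ref{thm:c-vec} with no separate proof, so it is meant as the direct consequence you spell out---$D_A[B_{\mathsf c}]$ has non-negative entries because $D_A$ has positive diagonal and $[B_{\mathsf c}]$ records composition multiplicities, and dividing by the non-zero integer $m_{\mathsf c}$ yields a sign-coherent $\mathsf c$.
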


We note that Corollary~\ref{cor:sign-coherence} was shown in \cite{Fu2017} for algebras over an algebraically closed field.

\begin{proof}[Proof of Theorem~\ref{thm:tBT1}]
Suppose that $A$ is $\tau$-tilting infinite and let $t$ be a positive integer. 
Then $A$ has infinitely many $\tau$-tilting pairs and hence infinitely many distinct $g$-vectors giving rise to infinitely many distinct $G$-matrices. 
As a consequence, there are infinitely many different $C$-matrices for $A$.
Thus, $A$ has infinitely many different $c$-vectors.
Let $\delta_A = \max\{\delta_i : 1 \leq i \leq n \}$, where, as before, $\delta_i = \dim_K(End_A(S(i)))$.
Therefore, using the sign-coherence of $c$-vectors,  by the pigeon hole principle, for every positive integer $t$ there is a $c$-vector $\mathsf{c}_t= (c_1, \dots, c_n)$ such that $|\sum_{i=1}^n c_i| = \sum_{i=1}^n |c_i| \geq \delta_At$.
Then Theorem~\ref{thm:c-vec} implies the existence of an $A$-module $B_{t}$ which is a brick and an integer $m_{t}$ such that $D_A[B_t]=m_t\mathsf{c}_t$.
Hence, $\sum_{i=1}^n \delta_A[B_{t}]_i \geq \sum_{i=1}^n \delta_i[B_{t}]_i =  \sum_{i=1}^n |m_{t}c_i| =|m_t|\sum_{i=1}^n |c_i| \geq |m_t|\delta_A t \geq \delta_A t$.
Hence for every positive integer $t$, there exists a brick $B_t$ such that the number of composition factors of $B_t$ is greater or equal to $t$.
\end{proof}

\def\cprime{$'$} \def\cprime{$'$}

\end{document}